\newcommand\blfootnote[1]{%
  \begingroup
  \renewcommand\thefootnote{}\footnote{#1}%
  \addtocounter{footnote}{-1}%
  \endgroup
}
\newtheorem{theo}{Theorem}[section]
\newtheorem{proposition}[theo]{Proposition}
\newtheorem{lemma}[theo]{Lemma}
\newtheorem{corollary}[theo]{Corollary}
\newtheorem{example}{Example}
\newfont{\nset}{msbm10}
\def\Par{\pi}
\def\G{\Gamma}
\def\Re{\mathbb R}
\def\A{{\mbox {\boldmath $A$}}}
\def\B{{\mbox {\boldmath $B$}}}
\def\C{{\mbox {\boldmath $C$}}}
\def\D{{\mbox {\boldmath $D$}}}
\def\E{{\mbox {\boldmath $E$}}}
\def\G{\Gamma}
\def\I{{\mbox {\boldmath $I$}}}
\def\J{{\mbox {\boldmath $J$}}}
\def\Ei{{\cal E}}
\def\Q{\mbox{\boldmath $Q$}}
\def\U{\mbox{\boldmath $U$}}
\def\V{\mbox{\boldmath $V$}}
\def\A{{\mbox {\boldmath $A$}}}
\def\matrix0{{\mbox {\boldmath $O$}}}
\def\S{\mbox{\boldmath $S$}}
\def\e{{\mbox{\boldmath $e$}}}
\def\j{{\mbox{\boldmath $j$}}}
\def\u{{\mbox{\boldmath $u$}}}
\def\v{{\mbox{\boldmath $v$}}}
\def\vec0{\mbox{\bf 0}}
\def\diag{\mathop{\rm diag }\nolimits}
\def\dist{\mathop{\rm dist }\nolimits}
\def\ecc{\mathop{\rm ecc }\nolimits}
\def\Ker{\mathop{\rm Ker }\nolimits}
\def\ev{\mathop{\rm ev }\nolimits}
\def\spec{\mathop{\rm sp }\nolimits}
\def\G{\Gamma}
\def\Re{\mathbb R}
\title{A general method to obtain the spectrum and \\ local spectra of a graph from its regular partitions
\thanks{This research is partially supported by the project 2017SGR1087 of the Agency for the Management of University and Research Grants (AGAUR) of the Government of Catalonia.
}}
\author{C. Dalf\'o\\
\small{Departament  de Matem\`atica}\\
\small{Universitat de Lleida, Igualada (Barcelona), Catalonia}\\
\vspace{.5cm}
\small{\url{cristina.dalfo@matematica.udl.cat}}\\
M. A. Fiol\\
\small{Deptartament de Matem\`atiques}\\
\small{Barcelona Graduate School of Mathematics,}\\
\small{Universitat Polit\`ecnica de Catalunya, Barcelona, Catalonia}\\
\small{\url{miguel.angel.fiol@upc.edu}}
}
\begin{document}
	
\maketitle

\noindent {\it Keywords:}
Graph; Adjacency matrix; Spectrum;  Eigenvalues; Local
multiplicities; Walk-regular graph.

\noindent {\it 2010 Mathematics Subject Classification:} 05E30, 05C50.

\begin{abstract}
It is well known that, in general, part of the spectrum of a graph can be obtained from the adjacency matrix of its quotient graph given by a regular partition.
In this paper, we propose a method to obtain all the spectrum, and also the local spectra, of a graph $\G$ from the quotient matrices of some of its regular partitions.
As examples, it is shown how to find the eigenvalues and (local) multiplicities of walk-regular, distance-regular, and distance-biregular graphs.
\end{abstract}

\blfootnote{\begin{minipage}[l]{0.3\textwidth} \includegraphics[trim=10cm 6cm 10cm 5cm,clip,scale=0.15]{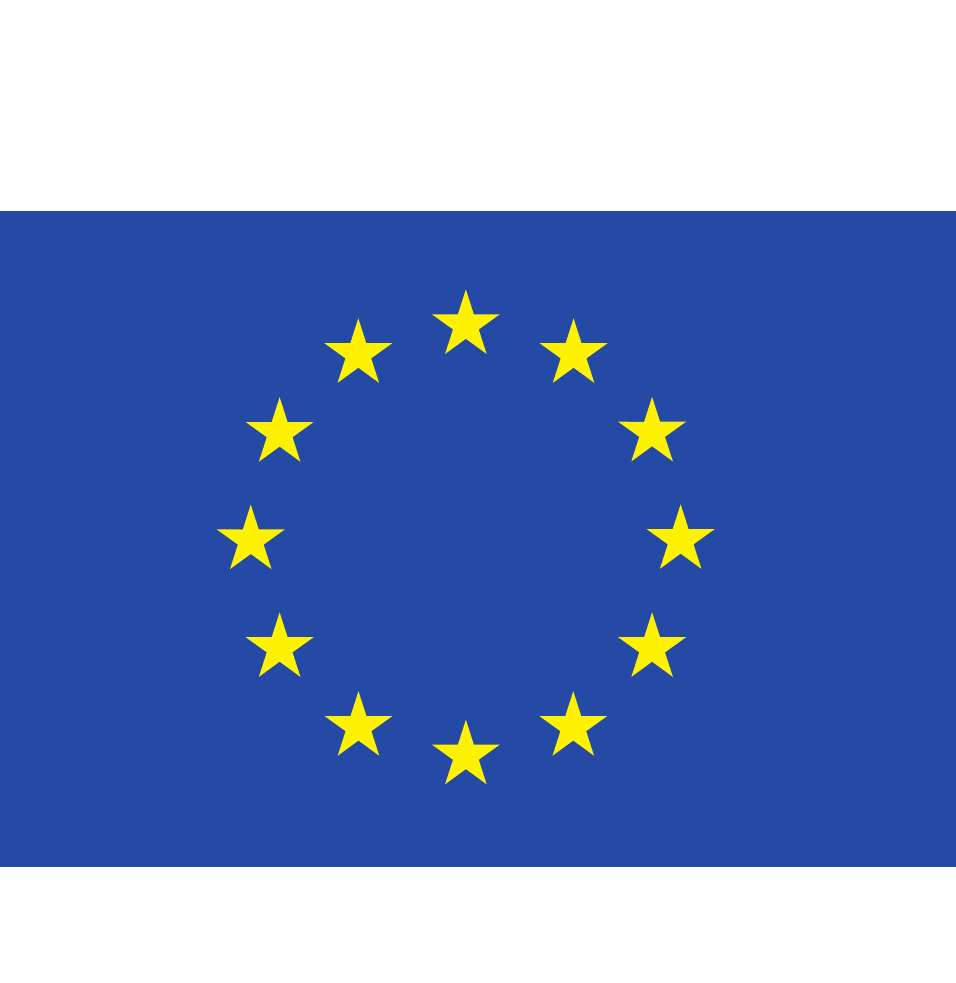} \end{minipage}  \hspace{-2cm} \begin{minipage}[l][1cm]{0.79\textwidth}
	The first author has also received funding from the European Union's Horizon 2020 research and innovation programme under the Marie Sk\l{}odowska-Curie grant agreement No 734922.
\end{minipage}}


\section{Preliminaries}
 First, let us recall some basic concepts and define our generic notation for graphs.

\subsection{Some notions on graphs and their spectra}

Throughout this paper, $\G=(V,E)$ denotes a simple and connected 
graph with order $n=|V|$, size $m=|E|$, and adjacency matrix
$\A$. The {\it distance} between two vertices $u$ and $v$ is
denoted by $\dist (u,v)$, so that the {\it eccentricity} of
a vertex $u$ is $\ecc(u)=\max_{v\in V}\dist (u,v)$, and the {\it
diameter} of the graph is $D=\max_{u\in V}\ecc(u)$. The set of
vertices at distance $i$, from a given vertex $u\in V$ is
denoted by $\G_i(u)$, for $i=0,1,\dots,D$, and we write $\G(u)=\G_1(u)$ for short. The degree of a
vertex $u$ is denoted by $\delta(u)=|\G_1(u)|$. The {\em
distance-$i$ graph} $\G_i$ is the graph with vertex set $V$, and
where two vertices $u$ and $v$ are adjacent if and only if
$\dist(u,v)=i$ in $\G$. Its adjacency matrix $\A_i$ is usually
referred to as the {\em distance-$i$ matrix} of $\G$.
The spectrum of a graph $\G$ of its adjacency matrix $\A(=\A_1)$  is denoted by
$$
\spec \G = \spec \A = \{\theta_0^{m(\theta_0)},\theta_1^{m(\theta_1)},\dots,
\theta_d^{m(\theta_d)}\},
$$
where the  different eigenvalues of $\G$, whose set is denoted by $\ev \G$, are in decreasing order,
$\theta_0>\theta_1>\cdots >\theta_d$, and the superscripts
stand for their multiplicities $m(\theta_i)$ for $i=0,\ldots,d$. In
particular, note that $m(\theta_0)=1$, since $\G$ is
connected, and $m(\theta_0)+m(\theta_1)+\cdots+m(\theta_d)=n$.
Alternatively, if we include repetitions, the eigenvalues of $\G$ are denoted as
$\lambda_1\ge \lambda_2\ge \cdots \ge \lambda_n.$


\subsection{Projections and local spectra}
For any graph with eigenvalue $\theta_i$ having multiplicity $m(\theta_i)$, its corresponding {\it $($principal\/$)$ idempotent} 
can be computed as $\E_i=\V_i\V_i^\top$, where $\V_i$ is the $n\times m(\theta_i)$ matrix whose columns form an orthonormal basis of the eigenspace $\Ei_i=\Ker (\A-\theta_i \I)$.
For instance, when $\G$ is a $\delta$-regular graph on $n$ vertices,
its largest eigenvalue $\theta_0=\delta$ has eigenvector $\j$,
the all-$1$ (column) vector, and corresponding idempotent
$\E_0=\frac{1}{n}\j\j^{\top}=\frac 1n \J$, where $\J$ is the all-$1$ matrix.

Alternatively, for every $i=0,1,\dots,d$, the orthogonal projection of $\Re^n$
onto the eigenspace $\Ei_i$ is given by
the Lagrange interpolating polynomial
\begin{equation}
\label{Li}
L_i=\frac{1}{\phi_i}\prod_{\stackrel{j=0}{j\neq i}}^d
(x-\theta_j) 
\end{equation}
of degree $d$, where $\phi_i=\prod_{j=0,j\neq i}^d
(\theta_i-\theta_j)$.
These polynomials
satisfy $L_i(\theta_i)=1$ and $L_i(\theta_j)=0$ for $j\neq i$. The idempotents are, then, 
$$
\E_i=L_i(\A)=\frac{1}{\phi_i}\prod_{\stackrel{j=0}{j\neq i}}^d
(\A-\theta_j\I),
$$
and they are known to satisfy the following properties (see, for instance, Godsil~\cite[p. 28]{g93}):
\begin{itemize}
\item[$(a)$]
$\E_i\E_j=\delta_{ij}\E_i$;
\item[$(b)$]
$\A\E_i=\theta_i\E_i$;
\item[$(c)$]
$p(\A)=\displaystyle\sum_{i=0}^d p(\theta_i)\E_i$, for any polynomial $p\in
\Re[x]$.
\end{itemize}
In particular, when $p(x)=x$ in $(c)$, we have the so-called {\em spectral decomposition theorem}: $\A=\sum_{i=0}^d \theta_i\E_i$.
The {\em $(u$-$)$local multiplicities} of the eigenvalue $\theta_i$, introduced by Fiol and Garriga in \cite{fg2}, were
defined as the square norm of the projection of $\e_u$ onto the eigenspace $\Ei_i$, where $\e_u$ is the unitary characteristic vector of a vertex $u\in V$. That is,
$$
m_u(\theta_i) = \|\E_i\e_u\|^2
= \langle\E_i\e_u,\e_u\rangle =
(\E_i)_{uu},\qquad u\in V,\ i =0,1,\dots,d.
$$
Notice that, in fact, $m_u(\theta_i)=\cos^2\beta_{ui}$, where $\beta_{ui}$ is the angle between $\e_u$ and $\E_i \e_u$. The values $\cos \beta_{ui}$,  for $u\in V$ and $i=0,\ldots,d$, were formally introduced by Cvetkovi\'c as the `angles' of $\G$ (see, for instance, Cvetkovi\'c and Doob~\cite{cd85}).

The local multiplicities can be seen as a generalization of the (standard) multiplicities when the graph is `seen' from the `base vertex' $u$. Indeed, they satisfy the following properties (see Fiol and Garriga~\cite{fg2}):
\begin{align}
\sum_{i=0}^d m_u(\theta_i) &=  1; \label{sum-locmul1}\\
\sum_{u\in V} m_u(\theta_i)&=  m(\theta_i),\qquad  i=0,1,\dots,d. \label{sum-locmul2}
\end{align}

If $\mu_0(=\theta_0)>\mu_1>\cdots>\mu_{d_u}$ represent the eigenvalues of $\G$ with non-null $u$-local multiplicity, we define the {$u$-local spectrum} of $\G$ as
$$
\spec_u\G=\{\mu_0^{m_u(\mu_0)},\mu_1^{m_u(\mu_1)},\dots,
\mu_{d_u}^{m_u(\mu_{d_u})}\}.
$$

By analogy with the local multiplicities, which correspond to
the diagonal entries of the idempotents, Fiol, Garriga, and
Yebra~\cite{fgy99} defined the {\it crossed $(uv$-$)$local
multiplicities\/} of the eigenvalue $\theta_i$,  denoted by
$m_{uv}(\theta_i)$, as
$$
m_{uv}(\theta_i)=\langle\E_i\e_u,\E_i\e_v\rangle
=\langle\E_i\e_u,\e_v\rangle=(\E_i)_{uv}, \qquad u,v\in V,\ i =0,1,\dots,d.
$$
(Thus, in particular, $m_{uu}(\theta_i)=m_{u}(\theta_i)$.)
These parameters allow us to compute the number of walks of
length $\ell$ between two vertices $u,v$ in the following way:
\begin{equation}
\label{crossed-mul->num-walks} a_{uv}^{({\ell})} =(\A^{\ell})_{uv}=
\sum_{i=0}^d m_{uv}(\theta_i)\theta_i^{\ell}, \qquad \ell =0,1,\dots
\end{equation}
Conversely, the values  $a_{uv}^{(\ell)}$, for $\ell=0,1,\ldots,d$,  determine 
 the crossed local multiplicities
 $m_{uv}(\theta_i)$. (Indeed, notice that the coefficients  of the system in \eqref{crossed-mul->num-walks} are the entries of a Vandermonde matrix).

\section{Regular partitions and local spectra}
\label{sec:reg-part}

Let $\G=(V,E)$ be a graph with adjacency matrix $\A$. A partition $\Par=(V_1,\ldots,
V_m)$ of its vertex set $V$ is called {\em regular} (or {\em equitable})
whenever, for any $i,j=1,\ldots,m$, the {\em intersection numbers}
$b_{ij}(u)=|\G(u)\cap V_j|$, where $u\in V_i$, do not depend on the vertex $u$ but only on the subsets (usually called {\em classes} or {\em
cells}) $V_i$ and $V_j$. In this case, such numbers are simply written as $b_{ij}$,
and the $m\times m$ matrix $\B=(b_{ij})$ is referred to as the {\em quotient matrix} of $\A$ with respect to $\Par$. This is also represented by the {\em quotient (weighted) graph} $\pi(\G)$ (associated to the partition $\pi$),
with vertices representing the cells, and there is an edge with weight $b_{ij}$ between  vertex
$V_i$ and vertex $V_j$ if and only if $b_{ij}\neq 0$. Of course, if $b_{ii}>0$, for some $i=1,\ldots,m$, the quotient graph $\pi(\G)$ has loops.

The {\em characteristic matrix} of (any) partition $\Par$ is the $n\times m$ matrix
$\S=(s_{ui})$ whose $i$-th column is the characteristic vector of $V_i$, that is, $s_{ui}=1$ if $u\in V_i$, and $s_{ui}=0$ otherwise. In terms of this matrix,
we have the following characterization of regular partitions (see Godsil \cite{g93}).

\begin{lemma}[\cite{g93}]
Let $\G=(V,E)$ be a graph with adjacency matrix $\A$, and vertex partition $\Par$
with characteristic matrix $\S$. Then, $\Par$ is regular if and only if there
exists an $m\times m$ matrix $\C$ such that $\S\C=\A\S$. Moreover, $\C=\B$, the
quotient matrix of $\A$ with respect to $\Par$.
\end{lemma}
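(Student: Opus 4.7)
The plan is to compute both sides of the candidate identity $\S\C=\A\S$ entrywise and identify what each entry measures. For the left side, since the rows of $\S$ are characteristic vectors of a \emph{partition}, each row of $\S$ contains exactly one $1$: if $u\in V_i$, then $s_{uk}=\delta_{ki}$. Hence
\[
(\S\C)_{uj}=\sum_{k=1}^{m} s_{uk}c_{kj}=c_{ij},\qquad u\in V_i.
\]
For the right side, the $(u,j)$ entry of $\A\S$ counts edges from $u$ into the cell $V_j$:
\[
(\A\S)_{uj}=\sum_{v\in V}A_{uv}s_{vj}=|\G(u)\cap V_j|=b_{ij}(u),\qquad u\in V_i.
\]
With these two formulas in hand, the proof becomes an elementary comparison.

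For the ``only if'' direction, assume $\Par$ is regular. Then by definition $b_{ij}(u)=b_{ij}$ depends only on $i,j$, so setting $\C:=\B=(b_{ij})$ gives $(\A\S)_{uj}=b_{ij}=(\S\B)_{uj}$ for every $u\in V_i$ and every $j$. Hence $\A\S=\S\B$, and a matrix $\C$ with the required property exists (namely $\C=\B$).

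For the ``if'' direction, suppose some $\C\in\R^{m\times m}$ satisfies $\S\C=\A\S$. Comparing the two entrywise expressions above yields
\[
b_{ij}(u)=|\G(u)\cap V_j|=c_{ij}\qquad\text{for all }u\in V_i,\ j=1,\dots,m.
\]
Since the right-hand side is independent of the choice of $u\in V_i$, the intersection numbers $b_{ij}(u)$ are constant on each cell, which is exactly the definition of a regular partition; moreover this also forces $c_{ij}=b_{ij}$, i.e.\ $\C=\B$, proving the ``moreover'' clause.

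There is no real obstacle here: the whole argument rests on the one observation that $\S$ has exactly one $1$ per row (because $\Par$ is a partition, not merely a cover), which collapses $\S\C$ to a selection of the $i$-th row of $\C$. If one wanted to avoid even this indexing, one could instead multiply $\S\C=\A\S$ on the left by $\S^\top$ and use $\S^\top\S=\diag(|V_1|,\dots,|V_m|)$ to isolate $\C$, but the direct entrywise argument above is the cleanest route.
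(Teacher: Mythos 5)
Your proof is correct and complete: the observation that each row of $\S$ has exactly one $1$ (so $(\S\C)_{uj}=c_{ij}$ for $u\in V_i$) together with $(\A\S)_{uj}=|\G(u)\cap V_j|$ gives both directions and the identification $\C=\B$ at once. The paper itself offers no proof---it cites Godsil---and your entrywise argument is precisely the standard one from that source, so there is nothing to compare beyond noting that your write-up fills in the omitted details correctly.
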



Using the above lemma, it can be proved that  $\spec \B\subseteq \spec \A$. Moreover, we have the following result by the authors~\cite{df17}.

\begin{lemma}[\cite{df17}]
\label{lemma:CrisMA}
Let $\G$ be a graph with adjacency matrix $\A$. Let $\pi=(V_1,\ldots,V_m)$ be a regular partition of $\G$, with quotient matrix $\B$. Then, the number of $\ell$-walks from any vertex $u\in V_i$ to all vertices of $V_j$ is the
$ij$-entry $b_{ij}^{(\ell)}$ of $\B^{\ell}$.
\end{lemma}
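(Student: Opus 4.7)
The plan is to leverage the matrix identity $\A\S = \S\B$ from the preceding lemma and iterate it, then extract the claim by choosing a single entry.

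First I would observe that the number of $\ell$-walks from a fixed vertex $u \in V_i$ to all vertices of $V_j$ is, by definition, $\sum_{v\in V_j}(\A^\ell)_{uv}$. Since the $j$-th column of the characteristic matrix $\S$ is precisely the characteristic vector of $V_j$, this sum equals the $(u,j)$-entry of the product $\A^\ell \S$. So the statement to prove reduces to the matrix identity $\A^\ell \S = \S \B^\ell$, after which one evaluates its $(u,j)$-entry using that $u\in V_i$ makes the only nonzero entry of row $u$ of $\S$ sit in column $i$.

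The main step is therefore a straightforward induction on $\ell$. The base case $\ell=0$ is trivial, since $\A^0\S = \S = \S\B^0$. For the inductive step, assume $\A^\ell \S = \S\B^\ell$. Then
\begin{equation*}
\A^{\ell+1}\S = \A\,(\A^\ell \S) = \A\S\B^\ell = (\S\B)\B^\ell = \S\B^{\ell+1},
\end{equation*}
where the third equality uses the identity $\A\S = \S\B$ supplied by the previous lemma. This closes the induction.

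Finally, evaluating the $(u,j)$-entry of $\A^\ell\S = \S\B^\ell$ for $u\in V_i$ gives
\begin{equation*}
\sum_{v\in V_j}(\A^\ell)_{uv} = (\A^\ell\S)_{uj} = (\S\B^\ell)_{uj} = \sum_{k=1}^m s_{uk}(\B^\ell)_{kj} = (\B^\ell)_{ij} = b_{ij}^{(\ell)},
\end{equation*}
as required. There is really no obstacle here; the only subtlety is recognizing that summing over vertices of $V_j$ is exactly right-multiplication by the relevant column of $\S$, which turns the walk-counting question into a clean matrix identity.
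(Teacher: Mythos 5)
Your proof is correct. Note that the paper itself gives no proof of this lemma --- it is quoted from the earlier reference \cite{df17} --- so there is no in-paper argument to match against; the closest comparison is the proof the authors do supply for the subsequent Lemma \ref{lemma-walks}, which proceeds by a direct combinatorial induction on $\ell$, decomposing an $(\ell+1)$-walk into an $\ell$-walk followed by one step and invoking the intersection numbers $b_{jh}$. Your route is different in flavor and arguably cleaner: you encode the walk count $\sum_{v\in V_j}(\A^\ell)_{uv}$ as the $(u,j)$-entry of $\A^\ell\S$ and reduce everything to the matrix identity $\A^\ell\S=\S\B^\ell$, obtained by iterating $\A\S=\S\B$ from the characterization of regular partitions. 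The matrix identity buys you a one-line induction and makes transparent why the count is independent of the choice of $u\in V_i$ (row $u$ of $\S$ selects only column $i$), whereas the paper's combinatorial style has the advantage of not presupposing the characterization lemma and of generalizing directly to the per-vertex refinement proved in Lemma \ref{lemma-walks}. Both are complete; yours is a valid and standard proof of the stated lemma.
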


By using the last lemma, now we have the next result.
\begin{lemma}
\label{lemma-walks}
Let $\G$ be a graph with adjacency matrix $\A$, and $\pi=(V_1,\ldots,V_m)$ a regular partition of $\G$ with quotient matrix $\B$. If $V_1=\{u\}$, then the number of $\ell$-walks from vertex $u$ to a vertex $v\in V_j$, for $j=1,\ldots,m$, only depends on $j$:
\begin{equation}
(\A^{\ell})_{uv}=a_j^{(\ell)}=\frac{1}{|V_j|}(\B^{\ell})_{1j}.
\end{equation}
\end{lemma}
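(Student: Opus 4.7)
The plan is to combine two applications of Lemma~\ref{lemma:CrisMA}, exploiting the singleton hypothesis $V_1=\{u\}$ together with the symmetry of the adjacency matrix. The first application, in its natural direction, yields the aggregate count
\[
\sum_{v\in V_j}(\A^{\ell})_{uv}=(\B^{\ell})_{1j},
\]
since the sum of $\ell$-walks from $u\in V_1$ to all vertices of $V_j$ is precisely the $(1,j)$-entry of $\B^\ell$. What remains is to show that each individual summand takes the same value, and then the stated formula follows by dividing by $|V_j|$.

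For the uniformity, I would apply Lemma~\ref{lemma:CrisMA} a second time, but now with the roles of the classes reversed: for an arbitrary $v\in V_j$, the lemma counts the $\ell$-walks from $v$ to all vertices of $V_1$. The singleton assumption makes ``all vertices of $V_1$'' consist of $u$ alone, so this sum collapses to a single term and gives
\[
(\A^{\ell})_{vu}=(\B^{\ell})_{j1}, \qquad v\in V_j.
\]
Because $\A$ is symmetric, $(\A^\ell)_{uv}=(\A^\ell)_{vu}$, so the left-hand side equals $(\B^\ell)_{j1}$ and, crucially, is independent of the choice of $v\in V_j$. Thus the number of $\ell$-walks from $u$ to a vertex $v\in V_j$ is the same for all such $v$; call this common value $a_j^{(\ell)}$.

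Combining the two identities, I sum over $v\in V_j$:
\[
|V_j|\,a_j^{(\ell)}=\sum_{v\in V_j}(\A^{\ell})_{uv}=(\B^{\ell})_{1j},
\]
which gives $a_j^{(\ell)}=\frac{1}{|V_j|}(\B^{\ell})_{1j}$, as desired. As a by-product, one also reads off the relation $(\B^{\ell})_{j1}=\frac{1}{|V_j|}(\B^{\ell})_{1j}$ between off-diagonal entries of $\B^\ell$.

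I do not expect any serious obstacle: the argument is essentially a bookkeeping exercise once the singleton class $V_1=\{u\}$ is used to turn one of the ``sum over a class'' statements of Lemma~\ref{lemma:CrisMA} into a statement about a single entry. The only subtlety worth flagging is that the uniformity of $(\A^\ell)_{uv}$ across $v\in V_j$ is not, in general, a direct consequence of $\pi$ being regular --- it genuinely needs the symmetry of $\A$ together with the fact that the ``other'' class is a single vertex.
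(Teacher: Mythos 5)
Your argument is correct, but it takes a genuinely different route from the paper's. The paper proves the constancy of $(\A^{\ell})_{uv}$ over $v\in V_j$ by induction on $\ell$, decomposing each $(\ell+1)$-walk by its last step and using the intersection numbers $b_{jh}$ directly, so that $(\A^{\ell+1})_{uv}=\sum_{h}b_{jh}a_h^{(\ell)}$; the normalization by $|V_j|$ then comes from Lemma~\ref{lemma:CrisMA} exactly as in your first identity. You instead avoid the induction entirely by applying Lemma~\ref{lemma:CrisMA} a second time with the classes reversed, so that the singleton $V_1=\{u\}$ collapses the ``sum over the target class'' to the single entry $(\A^{\ell})_{vu}=(\B^{\ell})_{j1}$, and symmetry of $\A$ transfers this to $(\A^{\ell})_{uv}$. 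Your version is shorter, makes the final normalization step explicit (the paper leaves it implicit after ``as claimed''), and yields the identity $(\B^{\ell})_{j1}=\frac{1}{|V_j|}(\B^{\ell})_{1j}$ as a by-product; the paper's inductive version has the minor advantage of exhibiting the recurrence $a_j^{(\ell+1)}=\sum_h b_{jh}a_h^{(\ell)}$, which is the walk-counting mechanism reused elsewhere, and of not leaning a second time on the cited lemma. Your closing remark is also accurate: the uniformity really does use both the symmetry of $\A$ and the fact that $V_1$ is a singleton, and is not a formal consequence of regularity of $\pi$ alone.
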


\begin{proof}
To prove that the number of $\ell$-walks between $u$ and $v\in V_j$ is a constant, we use induction on $\ell$. The result is clearly true for $\ell=0$, since $\B^0=\I$, and for
$\ell=1$ because of the definition of $\B$. Suppose that the result holds for some $\ell>1$. Then, the set of walks of length $\ell+1$ from $u$ to $v\in V_j$ is obtained from the set of $\ell$-walks from $u$ to vertices $w\in V_h$ adjacent to $v$. Then, the number of these 
walks is
$$
(\A^{\ell+1})_{uv}=\sum_{h=1}^m\sum_{w\in \G(v)\cap V_h}(\A^{\ell})_{uw}=\sum_{h=1}^m b_{jh}a_h^{(\ell)}=a^{(\ell+1)}_{j},
$$
 as claimed.
\end{proof}


Let $\B$ be a quotient (diagonalizable) $m\times m$ matrix as above, with $\spec \B=\{\tau_0^{m(\tau_0)},\tau_1^{m(\tau_1)},$ $\ldots,\tau_e^{m(\tau_e)}\}$, and let $\D=\diag(\tau_0,\tau_1,\ldots,\tau_e)$.
Let $\Q$ be the $m\times m$ matrix that diagonalizes $\B$, that is, $\Q^{-1}\B\Q=\D$. For $i=0,\ldots,e$, let $\V_i$ be the $m\times m(\tau_i)$ matrix formed by the columns of $\Q$ corresponding to the right $\tau_i$-eigenvectors of $\B$. Let $\U_i$ be the $m(\tau_i)\times m$ matrix formed by the corresponding rows of $\Q^{-1}$, which are the left $\tau_i$-eigenvectors of $\B$. Then, the $i$-th idempotent of $\B$ is $\overline{\E}_i=\V_i\U_i$. Moreover, $\overline{\E}_i$ can be computed as in the case of the (symmetric) adjacency matrix
by using the Lagrange interpolating polynomial $\overline{L}_i$ satisfying $\overline{L}_i(\tau_j)=\delta_{ij}$:
$$
\overline{\E}_i=\overline{L}_i(\B)=
\frac{1}{\displaystyle\prod_{\stackrel{j=0}{j\neq i}}^e
(\tau_i-\tau_j)}\prod_{\stackrel{j=0}{j\neq i}}^e
(\B-\tau_j\I).
$$

The above results yield a simple method to compute the local spectra of a vertex $u$ in a given regular partition or, more generally, the crossed multiplicities between $u$ and any other vertex $v$. Besides, with the union of the local spectra (applying \eqref{sum-locmul2}) of the different classes of vertices according to their corresponding regular partitions (that is, we `hung' the quotient graph from every one of the different classes of vertices), we obtain all the spectrum of the original graph.
Thus, the main result is the following.

\begin{theo}
\label{main-theo}
Let $\G$ be a graph with adjacency matrix $\A$ and set of different eigenvalues $\ev \G=\{\theta_0,\theta_1,\ldots,\theta_d\}$. Let $\pi=(V_1,\ldots,V_m)$ be a regular partition of $\G$, with $V_1=\{u\}$. Let $\B$ be the quotient matrix of $\pi$, with set of different eigenvalues $\ev \B=\{\tau_0,\tau_1,\ldots,\tau_e\}\subseteq \ev \G$. Let $L_i$ and $\overline{L}_i$ be the Lagrange interpolating polynomials satisfying $L_i(\theta_j)=\delta_{ij}$ for $i,j=0,\ldots,d$, and  $\overline{L}_i(\tau_j)=\delta_{ij}$ for $i,j=0,\ldots,e$, respectively. Let $\E_i=L_i(\A)$ and  $\overline{\E}_i=\overline{L}_i(\B)$ be the corresponding idempotents. Then, for every vertex $v\in V_j$, the crossed $uv$-local multiplicity of $\theta_i$ is
\begin{align}
m_{uv}(\theta_i) &= \frac{1}{|V_j|}(L_i(\B))_{1j},\qquad i=0,1,\ldots,d,
\label{eq1xmul}
\end{align}
or, alternatively,
\begin{align}
\!\!\!\!\!\!\!\!\!\!\!\!\!\!
m_{uv}(\theta_i) &=
\left\{
\begin{array}{cc}
\frac{1}{|V_j|}(\overline{\E}_i)_{1j}&  \mbox{if $\theta_i\in \ev\B$},\\
0 & otherwise.
\end{array}
\right. \label{eq2xmul}
\end{align}
\end{theo}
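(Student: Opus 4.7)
The plan is to reduce both identities to the polynomial characterization $\E_i=L_i(\A)$ combined with Lemma \ref{lemma-walks}, which converts entries of $\A^\ell$ indexed by the singleton class $V_1=\{u\}$ into entries of $\B^\ell$.

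I would first establish \eqref{eq1xmul}. Since $m_{uv}(\theta_i)=(\E_i)_{uv}=(L_i(\A))_{uv}$ and $L_i$ is a polynomial of degree $d$, I expand $L_i(x)=\sum_{\ell=0}^{d}c_\ell^{(i)}x^\ell$ to get
$$
m_{uv}(\theta_i)=\sum_{\ell=0}^{d}c_\ell^{(i)}(\A^\ell)_{uv}.
$$
Because $V_1=\{u\}$ and $v\in V_j$, Lemma \ref{lemma-walks} applies termwise and gives $(\A^\ell)_{uv}=\frac{1}{|V_j|}(\B^\ell)_{1j}$. Summing back yields $m_{uv}(\theta_i)=\frac{1}{|V_j|}\sum_{\ell=0}^{d}c_\ell^{(i)}(\B^\ell)_{1j}=\frac{1}{|V_j|}(L_i(\B))_{1j}$, which is \eqref{eq1xmul}.

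For \eqref{eq2xmul}, I would invoke the spectral decomposition of the quotient matrix. Since $\B$ is diagonalizable, its idempotents $\overline{\E}_k$ satisfy $p(\B)=\sum_{k=0}^{e}p(\tau_k)\overline{\E}_k$ for every polynomial $p$. Applied to $p=L_i$ and using $\ev\B\subseteq\ev\G$, each $\tau_k$ equals some $\theta_{j(k)}$, so $L_i(\tau_k)=\delta_{i,j(k)}$. Hence $L_i(\B)=\mathbf 0$ when $\theta_i\notin\ev\B$, while $L_i(\B)=\overline{\E}_{k_0}$ when $\theta_i=\tau_{k_0}\in\ev\B$. Substituting into the formula just proved gives \eqref{eq2xmul} once we identify $\overline{\E}_i$ with the idempotent of $\B$ attached to whichever $\tau_k$ coincides with $\theta_i$.

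The only real subtlety is the bookkeeping between the indexings $\{\theta_i\}_{i=0}^{d}$ and $\{\tau_k\}_{k=0}^{e}$; after that convention is fixed, both identities are essentially an immediate consequence of Lemma \ref{lemma-walks} together with the fact that Lagrange interpolating polynomials act as coordinate projectors on the spectrum. No deeper spectral argument seems necessary.
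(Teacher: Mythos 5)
Your proof of \eqref{eq1xmul} is exactly the paper's argument: expand $L_i$ as a polynomial, use $m_{uv}(\theta_i)=(L_i(\A))_{uv}$, and apply Lemma \ref{lemma-walks} termwise to replace $(\A^\ell)_{uv}$ by $\frac{1}{|V_j|}(\B^\ell)_{1j}$. For \eqref{eq2xmul}, however, you take a genuinely different and in fact more direct route. The paper does not deduce \eqref{eq2xmul} from \eqref{eq1xmul}; instead it goes back to the walk counts, writes $a_{uv}^{(\ell)}=\frac{1}{|V_j|}\sum_{k=0}^e\tau_k^\ell(\overline{\E}_k)_{1j}$ via the spectral decomposition of $\B$ and Lemma \ref{lemma-walks}, observes that the crossed multiplicities are the unique solution of the Vandermonde system $\sum_{i=0}^d\theta_i^\ell m_{uv}(\theta_i)=a_{uv}^{(\ell)}$, $\ell=0,\ldots,d$, and then checks that the values in \eqref{eq2xmul} satisfy that system. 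You instead apply the identity $p(\B)=\sum_{k=0}^e p(\tau_k)\overline{\E}_k$ with $p=L_i$, so that $L_i(\B)$ collapses to $\overline{\E}_{k_0}$ when $\theta_i=\tau_{k_0}\in\ev\B$ and to the zero matrix otherwise, and then substitute into the already-proved \eqref{eq1xmul}. Both arguments are correct and rest on the same two ingredients (Lemma \ref{lemma-walks} and the diagonalizability of $\B$), but yours buys a cleaner logical structure: \eqref{eq2xmul} becomes an immediate corollary of \eqref{eq1xmul}, and you avoid the paper's slightly elliptical ``a (the) solution of this system is obtained when\ldots'' step, which tacitly requires verifying that the proposed values do solve the Vandermonde system. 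Your closing remark about reconciling the indexings of $\{\theta_i\}$ and $\{\tau_k\}$ is exactly the right bookkeeping point, and it is the only place where care is needed.
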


\begin{proof}
Let $L_i(x)=\sum_{r=0}^d\zeta_r x^r$. Then, for every $v\in V_j$ and $i=0,1,\ldots,d$, and using Lemma \ref{lemma-walks}, we have
\begin{align*}
 m_{uv}(\theta_i) &= (\E_i)_{uv}=(L_i(\A))_{uv}
   =  \sum_{r=0}^d\zeta_r (\A^r)_{uv}\\
    & = \frac{1}{|V_j|}\sum_{r=0}^d\zeta_r (\B^r)_{1j}
   =\frac{1}{|V_j|}(L_i(\B))_{1j},
 \end{align*}
which proves \eqref{eq1xmul}. To prove \eqref{eq2xmul}, note first that, by the spectral decomposition theorem,
$\B^r=\sum_{i=0}^e \tau_i^r\overline{\E}_i$. Then, by Lemma \ref{lemma-walks}, the numbers of $\ell$-walks from $u$ to $v\in V_j$ are
$$
a_{uv}^{(\ell)}=
\frac{1}{|V_j|}\sum_{i=0}^e\tau_i^{\ell} (\E_i)_{1j},\qquad \ell=0,\ldots,d,
$$
which, as already commented, determine 
the local multiplicities because of the system of equations
$$
\sum_{i=0}^d \theta_i^{\ell} m_{uv}(\theta_i)=a_{uv}^{(\ell)}, \qquad \ell=0,\ldots,d.
$$
But a (the) solution of this system is obtained when the multiplicities $m_{uv}(\theta_i)$, for $i=0,\ldots,d$, are given by \eqref{eq2xmul}, as claimed.
\end{proof}

In particular, notice that this result allows us to compute the {$u$-local spectrum} of $\G$ as
\begin{equation}
\spec_u\G=\{\tau_0^{m_u(\tau_0)},\tau_1^{m_u(\tau_1)},\dots,
\tau_{e}^{m_u(\tau_{e})}\}, 
\label{local-sp}
\end{equation}
where $\tau_i\in\ev \B$ and $m_u(\tau_i)=(\overline{\E}_i)_{11}$, for $i=0,\ldots,e$.

Let us show an example.
\begin{example}
Let $\Delta=\G+u$ be the cone of a $k$-regular graph on $n$ vertices, where the `new' vertex $u$ is joined to all vertices of $\G$. Then, $\Delta$ has a regular partition with quotient matrix
$$
\B=\left(
\begin{array}{cc}
0 & n \\
1 & k
\end{array}
\right)
$$
and eigenvalues $\theta_0=\frac{1}{2}(k+\sqrt{k^2+4n})$ and $\theta_1=\frac{1}{2}(k-\sqrt{k^2+4n})$. (Notice that the first expression can be rewritten as $k=\theta_0-\frac{\theta_0}{n}$, in agreement with the results of Dalf\'o, Fiol, and Garriga \cite{dfg11}.) Thus, the idempotents of $\B$ turn out to be
$\overline{\E}_0=(\B-\theta_1\I)/(\theta_0-\theta_1)$ and $\overline{\E}_1=(\B-\theta_0\I)/(\theta_1-\theta_0)$.
Consequently, Theorem \ref{main-theo} implies that the local $u$-spectrum of $\Delta$ is
$$
\spec_u \Delta=\left\{\frac{1}{2}\left(k+\sqrt{k^2+4n}\right)^{m_u(\theta_0)}, \ \frac{1}{2}\left(k-\sqrt{k^2+4n}\right)^{m_u(\theta_1)}\right\},
$$
where $m_u(\theta_0)=\frac{1}{2}(1-k/\sqrt{k^2+4n})$ and
$m_u(\theta_1)=\frac{1}{2}(1+k/\sqrt{k^2+4n})$.
\end{example}

Another simple consequence of our main result is obtained for simple eigenvalues of $\B$.
\begin{corollary}
Let $\G$ be a graph with adjacency matrix $\A$ and a set of different eigenvalues $\ev \G=\{\theta_0,\theta_1,\ldots,\theta_d\}$, $\pi=(V_1,\ldots,V_m)$ a regular partition of $\G$ with $V_1=\{u\}$, $\B$ the quotient matrix of $\pi$ with a set of different eigenvalues $\ev \B=\{\tau_0,\tau_1,\ldots,\tau_e\}\subseteq \ev \G$, and $\E_i$ and $\overline{\E}_i$ the corresponding idempotents. Suppose that, for some $i$, $\theta_i\in \ev\A\cap\ev\B$ has multiplicity $1$. Let $\u_i=(u_{i1},\ldots,u_{im})$ and $\v_i=(v_{1i},\ldots,v_{im})^{\top}$ be the left and right eigenvectors of $\B$, respectively, corresponding to the eigenvalue $\theta_i$.
Then, for every vertex $v\in V_j$, the crossed $uv$-local multiplicity of $\theta_i$ in $\G$ is
\begin{equation}
m_{uv}(\tau_i)=\frac{1}{|V_j|}\frac{v_{1i}u_{ij}}{\langle\u_i,\v_i \rangle}, \qquad j=1,\ldots,m. \label{Biggs-general}
\end{equation}
\end{corollary}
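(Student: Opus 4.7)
The plan is to specialize Theorem \ref{main-theo}, specifically formula \eqref{eq2xmul}, to the case in which $\tau_i$ is an algebraically simple eigenvalue of $\B$. Since by hypothesis $\tau_i\in\ev\B$, that formula already expresses the crossed local multiplicity as $m_{uv}(\tau_i)=|V_j|^{-1}(\overline{\E}_i)_{1j}$, so the only task is to identify the $(1,j)$-entry of the idempotent $\overline{\E}_i$ of $\B$ in terms of the given left and right eigenvectors $\u_i$ and $\v_i$.

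First I would recall the spectral decomposition of the (diagonalizable but in general non-symmetric) matrix $\B$ from the paragraph preceding Theorem \ref{main-theo}: writing $\B=\Q\D\Q^{-1}$, the idempotent associated with a simple eigenvalue $\tau_i$ is the rank-one matrix obtained as the product of the $i$-th column of $\Q$ with the $i$-th row of $\Q^{-1}$. For arbitrary left and right eigenvectors $\u_i,\v_i$, this means $\overline{\E}_i = c^{-1}\v_i\u_i$ for a single scalar $c$ that absorbs both rescalings. To pin down $c$, I would use the defining identity $\overline{\E}_i^2=\overline{\E}_i$; a one-line computation with the rank-one ansatz yields
$$
c \;=\; \u_i\v_i \;=\; \langle \u_i,\v_i\rangle.
$$
Equivalently, this is just the requirement that the $i$-th row of $\Q^{-1}$ pairs to $1$ with the $i$-th column of $\Q$.

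Reading off the $(1,j)$-entry of the rank-one matrix $\v_i\u_i$ as $v_{1i}u_{ij}$, we get $(\overline{\E}_i)_{1j}=v_{1i}u_{ij}/\langle\u_i,\v_i\rangle$, and substituting this into \eqref{eq2xmul} gives exactly \eqref{Biggs-general}. The only delicate point in the argument is justifying the normalization $c=\langle\u_i,\v_i\rangle$: since the corollary allows $\u_i,\v_i$ to be arbitrary unnormalized eigenvectors, this inner product is precisely what makes the right-hand side invariant under independent rescalings of the two eigenvectors. Once this normalization is in place, the rest of the proof reduces to a direct substitution and no further ingredient is required.
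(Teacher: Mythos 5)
Your proof is correct and follows essentially the same route as the paper: both reduce the corollary to formula \eqref{eq2xmul} and identify $(\overline{\E}_i)_{1j}$ as the $(1,j)$-entry of the rank-one matrix $\v_i\u_i$ divided by the normalization $\langle\u_i,\v_i\rangle$. The only (cosmetic) difference is that you fix the scalar via idempotency $\overline{\E}_i^2=\overline{\E}_i$, whereas the paper uses $(\Q^{-1}\Q)_{ii}=1$ — a condition you yourself note is equivalent.
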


\begin{proof}
Let $\Q$ be a matrix that diagonalizes $\B$.
If, for some constants $\alpha$ and $\beta$, we have that $\alpha\u_i$ and $\beta \v_i$ are the corresponding row of $\Q^{-1}$ and column of $\Q$, respectively, then $(\Q^{-1}\Q)_{ii}=1$ implies that $\alpha\beta=\langle \u_i,\v_i\rangle^{-1}$. Thus, $(\overline{\E}_i)_{1j}=(\alpha\v_i\cdot \beta\u_i)_{1j}=\frac{v_{1i}u_{ij}}{\langle \u_i,\v_i\rangle}$ (where `$\cdot$' stands for the matrix product), and the result follows from \eqref{eq2xmul}.
\end{proof}

Alternatively, if $\u_i$ and $\v_i$ are already taken from the corresponding row and column of $\Q^{-1}$ and $\Q$, respectively, then $\alpha=\beta=1$, and \eqref{Biggs-general} can be simply written as
\begin{equation}
m_{uv}(\tau_i)=\frac{1}{|V_j|}(\Q^{-1})_{ij}(\Q)_{1i}, \qquad j=1,\ldots,m.
\label{Biggs-general2}
\end{equation}

\section{The local spectra of some families of graphs}

In this last section, we show the application of our method to obtain the local spectra and the complete spectrum of different well-known families of graphs.

\subsection{Walk-regular graphs}
Let $\G$ be a graph with spectrum as above.
If the number of closed walks of
length ${\ell}$ rooted at vertex $u$, that is,
$a_{uu}^{({\ell})}=\sum_{i=0}^d m_u(\theta_i)\theta_i^{\ell}$ only
depends on ${\ell}$, for each $\ell \ge 0$, then $\G$ is called
{\em walk-regular} (a concept introduced by Godsil and
McKay~\cite{gmk}). In this case, we write
$a_{uu}^{({\ell})}=a^{({\ell})}$. Note that, since
$a_{uu}^{(2)}=\delta(u)$, the degree of vertex $u$, a walk-regular
graph is necessarily regular.
Moreover, we say that $\G$ is {\em
spectrum-regular} if, for any $i=0,1,\ldots, d$, the $u$-local
multiplicity of $\theta_i$ does not depend on the vertex $u$.
By (\ref{crossed-mul->num-walks})
and the subsequent comment, it follows that
spectrum-regularity and walk-regularity are equivalent
concepts. Equation (\ref{crossed-mul->num-walks}) also shows that the existence of the constants
$a^{(0)},a^{(1)},\ldots,a^{(d)}$ suffices to assure
walk-regularity. It is well known that any distance-regular
graph, as well as any vertex-transitive graph, is walk-regular,
but the converse is not true.

\begin{proposition}
Let $\G$ be a walk-regular graph with $n$ vertices, having a regular partition  $\pi=(V_1,\ldots,V_m)$ with $V_1=\{u\}$ and quotient matrix $\B$.
Then, the spectrum of $\G$ is
$$
\spec \G = \{\theta_0^{m(\theta_0)},\theta_1^{m(\theta_1)},\dots,
\theta_d^{m(\theta_d)}\},
$$
where, for every $i=0,\ldots,d$, $\theta_i$ also is an eigenvalue of $\B$, with multiplicity
\begin{equation}
\textstyle
m(\theta_i)=n(\overline{\E}_i)_{11}=
\frac{n}{\prod_{\stackrel{j=0}{j\neq i}}^d
(\theta_i-\theta_j)}\left(\prod_{\stackrel{j=0}{j\neq i}}^d
(\B-\theta_j\I)\right)_{11}. \label{mult-walk-reg}
\end{equation}
\end{proposition}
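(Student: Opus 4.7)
The plan is to combine the walk-regularity hypothesis with Theorem~\ref{main-theo} specialized to the choice $v=u$. The two ingredients work together cleanly: walk-regularity forces every local multiplicity to be a strictly positive constant across vertices, while Theorem~\ref{main-theo} identifies that constant with an entry of an idempotent of $\B$.

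First I would use the equivalence, already noted in the paper, between walk-regularity and spectrum-regularity: for each $i=0,\ldots,d$, the value $m_u(\theta_i)$ is independent of $u\in V$. Summing over $V$ and invoking \eqref{sum-locmul2} gives
\begin{equation*}
n\,m_u(\theta_i) \;=\; \sum_{v\in V} m_v(\theta_i) \;=\; m(\theta_i),
\end{equation*}
so $m_u(\theta_i)=m(\theta_i)/n$ for every vertex $u$ and every eigenvalue $\theta_i$. Since $\G$ is connected and each $m(\theta_i)\ge 1$, these local multiplicities are all strictly positive.

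Next I would apply Theorem~\ref{main-theo} in the diagonal case $v=u\in V_1$, where $j=1$ and $|V_1|=1$. Formula \eqref{eq2xmul} reads $m_u(\theta_i)=(\overline{\E}_i)_{11}$ whenever $\theta_i\in\ev\B$, and $0$ otherwise. Combined with the previous step, positivity of $m_u(\theta_i)$ rules out the zero case, so every $\theta_i\in\ev\G$ is in fact an eigenvalue of $\B$. Together with the standard inclusion $\ev\B\subseteq\ev\A=\ev\G$ recalled after Lemma~\ref{lemma:CrisMA}, this yields the equality $\ev\B=\ev\G$; hence $e=d$ and the Lagrange polynomial $\overline{L}_i$ associated with $\B$ coincides with $L_i$.

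Finally, I would assemble the formula: $m(\theta_i)=n\,m_u(\theta_i)=n(\overline{\E}_i)_{11}$, and the explicit expression
\begin{equation*}
\overline{\E}_i \;=\; \overline{L}_i(\B) \;=\; L_i(\B) \;=\; \frac{1}{\prod_{j\neq i}(\theta_i-\theta_j)}\prod_{j\neq i}(\B-\theta_j\I)
\end{equation*}
delivers \eqref{mult-walk-reg} after taking the $(1,1)$-entry. There is no real obstacle in this argument; the only subtle point is the chain of implications ``walk-regularity $\Rightarrow$ $m_u(\theta_i)>0$ $\Rightarrow$ $\theta_i\in\ev\B$'', which is what upgrades the a priori inclusion $\ev\B\subseteq\ev\G$ to an equality and allows the Lagrange polynomial associated with $\B$ to be written directly in terms of the eigenvalues of $\A$.
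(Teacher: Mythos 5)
Your proposal is correct and follows essentially the same route as the paper: walk-regularity gives $m_u(\theta_i)=m(\theta_i)/n>0$ via \eqref{sum-locmul2}, which forces $\ev\B=\ev\G$, and Theorem~\ref{main-theo} with $v=u$ then yields \eqref{mult-walk-reg}. Your write-up is in fact slightly more careful than the paper's, since you make explicit the step that positivity of $m_u(\theta_i)$ rules out the zero case of \eqref{eq2xmul} and hence upgrades the inclusion $\ev\B\subseteq\ev\G$ to an equality.
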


\begin{proof}
Since $\sum_{u\in V}m_{uu}(\theta_i)=m(\theta_i)$,  the (standard)
multiplicity $m(\theta_i)$ `splits' equitably among the $n$ vertices, giving
$m_u(\theta_i)=m(\theta_i)/n$. Therefore, the different eigenvalues of $\B$ coincide with those of $\B$, and Theorem \ref{main-theo} yields \eqref{mult-walk-reg}.
\end{proof}

Let us see an example.
\begin{example}
Consider the walk-regular graph $\G$, that is not distance-regular, given by Godsil \cite{g93}. This graph and its quotient $\pi(\Gamma)$ are represented in Figure \ref{fig:godsil+quocient}.
The spectrum of $\pi(\Gamma)$ is $\spec \pi(\Gamma)=\{\theta_0^{m(\tau_0)},\theta_1^{m(\tau_1)},\theta_2^{m(\tau_2)},\theta_3^{m(\tau_3)}\}=\{4^1,2^2,0^1,-2^3,\}$. Since $\G$ is walk-regular, the spectrum of its quotient from a regular partition has all the different eigenvalues of the spectrum of $\G$. Now we compute the multiplicities $m(\theta_i)$, for $i=0,1,2,3$.
\begingroup
\everymath{\scriptstyle}
\small
\begin{eqnarray*}
&& m (\theta_0) = \frac {12} {(\theta_0- \theta_1) (\theta_0- \theta_2) (\theta_0- \theta_3)} \left((\B- \theta_1 \I) (\B- \theta_2 \I) (\B- \theta_3 \I)\right)_{11} = 1, \\
&& m (\theta_1) = \frac {12} {(\theta_1- \theta_0) (\theta_1- \theta_2) (\theta_1- \theta_3)} \left((\B- \theta_0 \I) (\B- \theta_2 \I) (\B- \theta_3 \I)\right)_{11} = 3, \\
&& m (\theta_2) = \frac {12} {(\theta_2- \theta_0) (\theta_2- \theta_1) (\theta_2- \theta_3)} \left((\B- \theta_0 \I) (\B- \theta_1 \I) (\B- \theta_3 \I)\right)_{11} = 3, \\
&& m (\theta_3) = \frac {12} {(\theta_3- \theta_0) (\theta_3- \theta_1) (\theta_3- \theta_2)} \left((\B- \theta_0 \I) (\B- \theta_1 \I) (\B- \theta_2 \I)\right)_{11} = 5.
\end{eqnarray*}
\endgroup
This gives that the spectrum of $\G$ is $\spec \G=\{4^1,2^3,0^3,-2^5,\}$, as it is known to be. Note that, in this example, we need to `hang' the graph $\G$ from only one of its vertices.
\end{example}

\begin{figure}[t]
    \begin{center}
        \includegraphics[width=18cm]{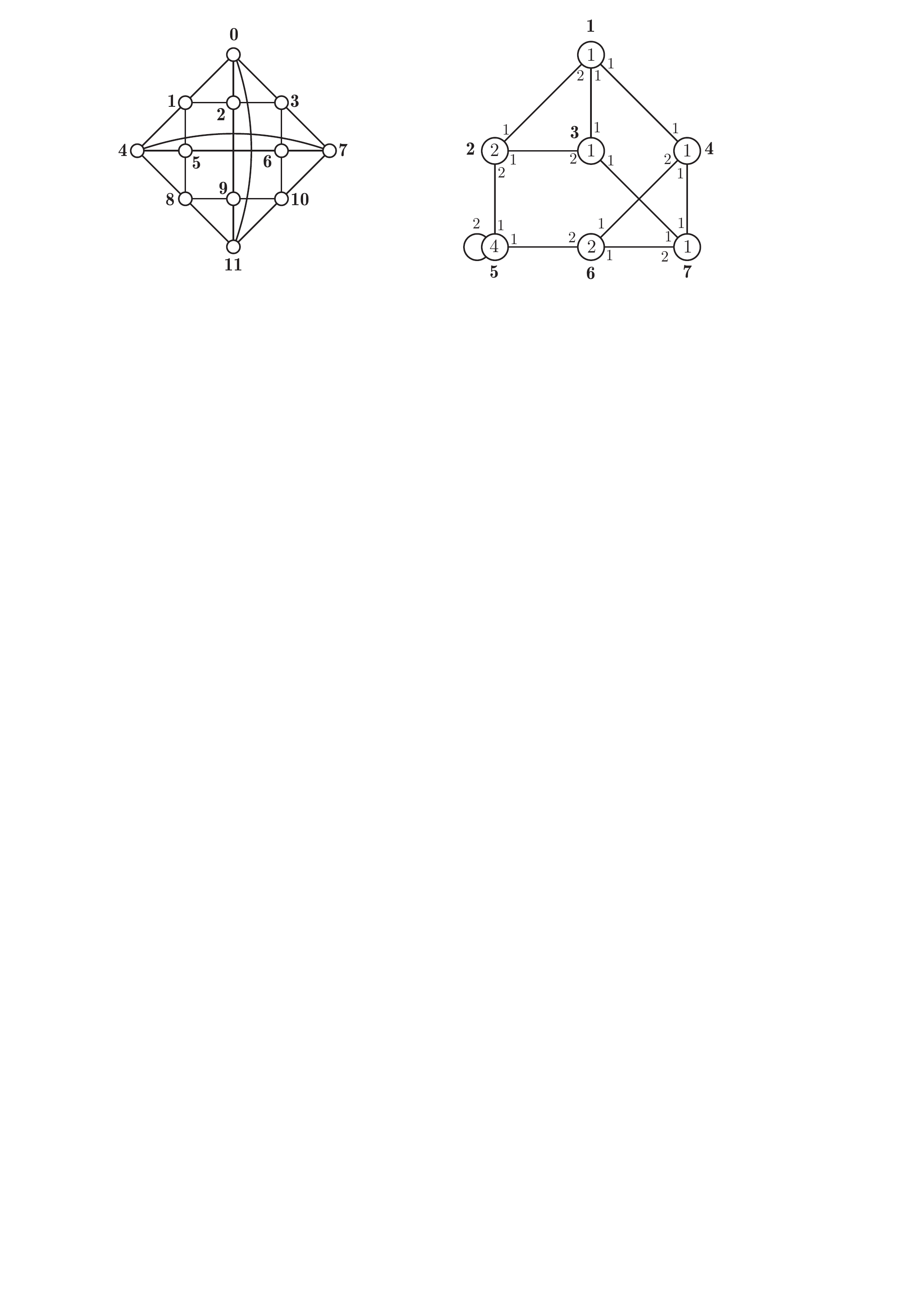}
    \end{center}
    \vskip-20cm
	\caption{Left: The walk-regular, but  not distance-regular, graph $\G$ given by Godsil \cite{g93}. Right: The quotient graph $\pi(\G)$ of $\G$. In boldface, there are the numberings of the vertices.}
	\label{fig:godsil+quocient}
\end{figure}

\subsection{Distance-regular graphs}
In particular, when $\G$ is distance-regular, the distance-partition with respect to any vertex is regular with the same quotient matrix $\B$ (see, for instance, Biggs \cite{biggs} or Fiol \cite{fiol02}). Moreover, since $\B$ is tridiagonal, all its eigenvalues are simple and \eqref{mult-walk-reg}, together with \eqref{Biggs-general}, leads to the known formula (see Biggs \cite{biggs})
\begin{equation}
m(\theta_i)=n(\overline{\E}_i)_{11}=\frac{n}{\langle\u_i,\v_i \rangle}, \qquad j=1,\ldots,m,
\end{equation}
where the eigenvectors $\u_i$ and $\v_i$ have been chosen to have the first entry $1$.

\subsection{Distance-biregular graphs}
Distance-biregular graphs are defined in a similar way as distance-regular graphs. They are connected bipartite graphs in which each of the two classes of vertices has its own intersection array. It was proved by Godsil and Shawe-Taylor \cite{gs87} that all vertices in the same bipartition class have the same intersection array. Delorme \cite{d94} gave the basic properties and some new examples of distance-biregular graphs.

\begin{figure}[t]
    \begin{center}
        \includegraphics[width=24cm]{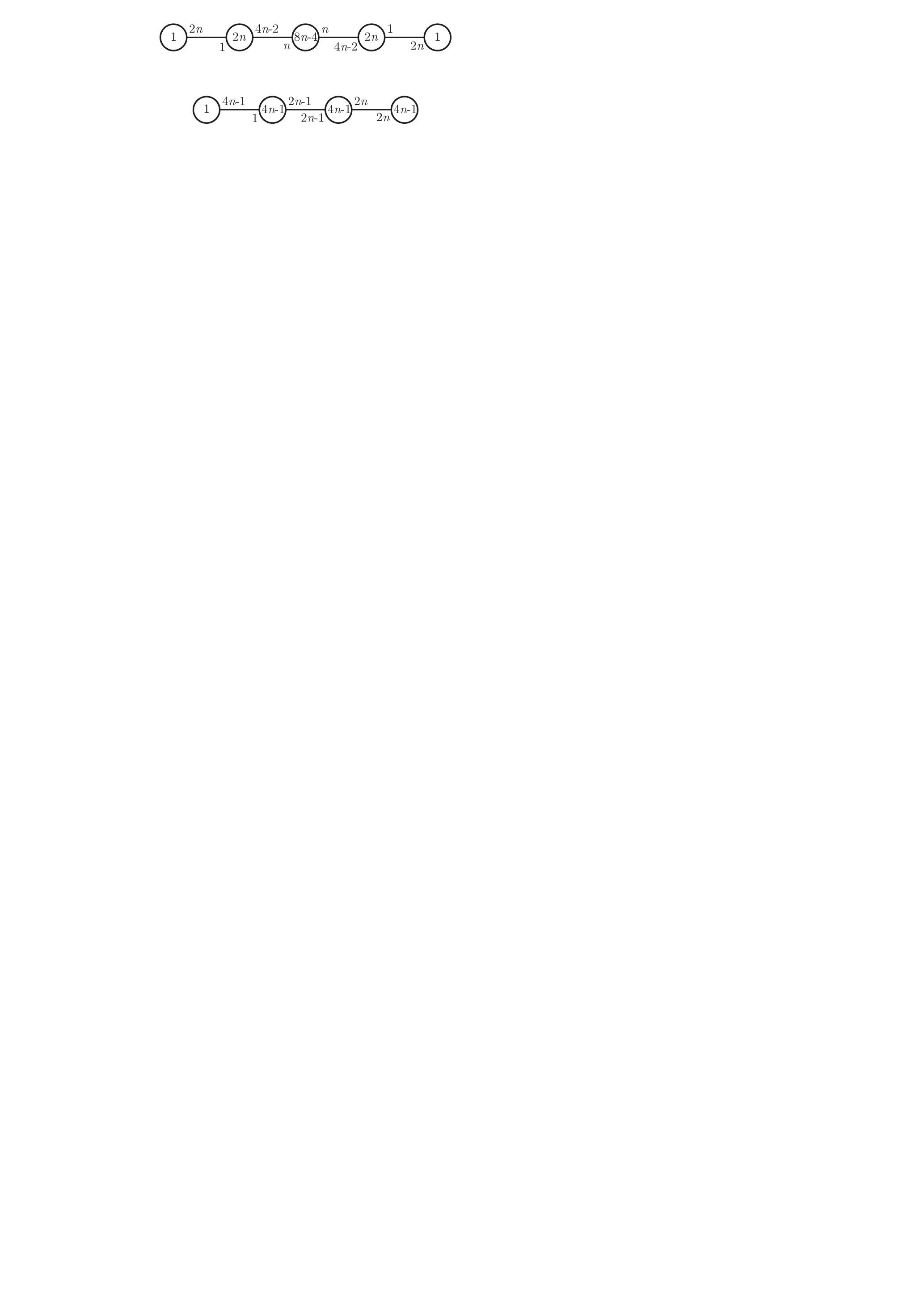}
    \end{center}
    \vskip-30.5cm
	\caption{The quotient graphs of the Hadamard distance-biregular graphs.}
	\label{fig:Hadamard-quocient}
\end{figure}

Let us give an example.
\begin{example}
A Hadamard matrix $H$ (with entries $\pm 1$ and mutually orthogonal rows) with size $4n$ gives a bipartite distance-biregular graph $Hb(n)$ on $12n-2$ vertices (see, for instance, Wallis \cite[p. 426]{w74}). Its stable sets $V_1$ and $V_2$ have $4n$ and $8n-2$ vertices, respectively. For instance, $Hb(1)$ is the subdivided complete graph $K_4$. The quotient graphs corresponding to the regular distance-partitions with respect to vertices in $V_1$ and $V_2$ are shown in Figure \ref{fig:Hadamard-quocient}. Thus, the respective quotient matrices are
$$
\B_1=\left(
\begin{array}{ccccc}
0 & 2n & 0 & 0 & 0\\
1 & 0 & 4n-2 & 0 & 0\\
0 & n & 0 & n & 0\\
0 & 0 & 4n-2 & 0 & 1\\
0 & 0 & 0 & 2n & 0
\end{array}
\right),\qquad
\B_2=\left(
\begin{array}{cccc}
0 & 4n-1 & 0 & 0\\
1 & 0 & 2n-1 & 0\\
0 & 2n-1 & 0 & 2n\\
0 & 0 & 2n & 0
\end{array}
\right),
$$
with (simple) eigenvalues
$
\ev \B_1 =\{\sqrt{8n^2-1}, \sqrt{2n}, 0, -\sqrt{2n}, -\sqrt{8n^2-1}\}
$
and
$
\ev \B_2 =\ev \B_1\setminus \{0\}.
$
Then, according to Theorem \ref{main-theo}, we can compute all the local (crossed) multiplicities of vertices in each stable set from the idempotents of $\B_1$ and $\B_2$. The results obtained are shown in Table \ref{table1} (for $u\in V_1$) and Table \ref{table2} (for $u\in V_2$), where the last row in both tables corresponds to the sums in \eqref{crossed-mul->num-walks} for $\ell=0$ (or, for the case $u=v$, to \eqref{sum-locmul1}). Moreover, from the columns of local multiplicities ($\dist(u,v)=0$), we can find the (global) multiplicities by using \eqref{sum-locmul2}, which in our case becomes
$$
m(\theta_i)=\sum_{u\in V_1}m_u(\theta_i)+\sum_{v\in V_2}m_v(\theta_i)= (8n-2)\cdot m_u(\theta_i)+4n\cdot m_v(\theta_i).
$$
Then, the complete spectrum of the Hadamard distance-biregular graph turns out to be
$$
\spec Hb(n)=\{\sqrt{8n^2-2n}^{1}, \sqrt{2n}^{4n-1},
0^{4n-2}, -\sqrt{2n}^{4n-1},-\sqrt{8n^2-2n}^{1}\}.
$$
\end{example}

\newpage

\begin{table}[h!]
\begin{center}
\begin{tabular}{|c|ccccc|}
\hline\\[-.4cm]
$\dist(u,v)$, $u\in V_1$ &  0& 1                  & 2  &3                  &4 \\[.2cm]
\hline
\hline\\[-.4cm]
$m_{uv}(\theta_0)$            & $\frac{1}{16n-4}$   & $\frac{\sqrt{2}}{8\sqrt{4n^2-n}}$  & $\frac{1}{16n-4}$  & $\frac{\sqrt{2}}{8\sqrt{4n^2-n}}$  & $\frac{1}{16n-4}$  \\[.2cm]
$m_{uv}(\theta_1)$            & $\frac{1}{4}$       & $\frac{\sqrt{2}}{8\sqrt{n}}$       & $0$                & $-\frac{\sqrt{2}}{8\sqrt{n}}$      & $-\frac{1}{4}$  \\[.2cm]
$m_{uv}(\theta_2)$            & $\frac{2n-1}{4n-1}$ & $0$                                & $-\frac{1}{8n-2}$  & $0$                                & $\frac{2n-1}{4n-1}$  \\[.2cm]
$m_{uv}(\theta_3)$            & $\frac{1}{4}$       & $-\frac{\sqrt{2}}{8\sqrt{n}}$      & $0$                & $\frac{\sqrt{2}}{8\sqrt{n}}$       & $-\frac{1}{4}$  \\[.2cm]
$m_{uv}(\theta_4)$            & $\frac{1}{16n-4}$   & $-\frac{\sqrt{2}}{8\sqrt{4n^2-n}}$ & $\frac{1}{16n-4}$  & $-\frac{\sqrt{2}}{8\sqrt{4n^2-n}}$ & $\frac{1}{16n-4}$  \\[.2cm]
\hline\\[-.4cm]
$\sum_{i=0}^4 m_{uv}(\theta_i)$          & $1$                 & $0$                                & $0$                & $0$                                & $0$ \\[.2cm]
\hline
\end{tabular}
\end{center}
\caption{Local multiplicities, from a vertex $u\in V_1$, of the Hadamard distance-biregular graph $Hb(n)$.}
\label{table1}
\end{table}
\begin{table}[h!]
\begin{center}
\begin{tabular}{|c|cccc|}
\hline\\[-.4cm]
$\dist(u,v)$, $u\in V_2$&  0& 1                  & 2  &3                   \\[.2cm]
\hline
\hline\\[-.4cm]
$m_{uv}(\theta_0)$            & $\frac{1}{8n}$      & $\frac{\sqrt{2}}{8\sqrt{4n^2-n}}$                 & $\frac{1}{8n}$                     & $\frac{\sqrt{2}}{8\sqrt{4n^2-n}}$ \\[.2cm]
$m_{uv}(\theta_1)$            & $\frac{4n-1}{8n}$   & $\frac{\sqrt{2}}{8\sqrt{n}}$                     & $-\frac{1}{8n}$                    & $-\frac{\sqrt{2}}{8\sqrt{n}}$  \\[.2cm]
$m_{uv}(\theta_2)$           & $0$                      & $0$      & $0$                & $0$\\[.2cm]
$m_{uv}(\theta_3)$            & $\frac{4n-1}{8n}$   & $-\frac{\sqrt{2}}{8\sqrt{n}}$                    & $-\frac{1}{8n}$                    & $\frac{\sqrt{2}}{8\sqrt{n}}$  \\[.2cm]
$m_{uv}(\theta_4)$           & $\frac{1}{8n}$      & $-\frac{\sqrt{2}}{8\sqrt{4n^2-n}}$               & $\frac{1}{8n}$                     & $-\frac{\sqrt{2}}{8\sqrt{4n^2-n}}$\\[.2cm]

\hline\\[-.4cm]
$\sum_{i=0}^3 m_{uv}(\theta_i)$         & $1$                 & $0$                                               & $0$                                & $0$ \\[.2cm]
\hline
\end{tabular}
\end{center}
\caption{Local multiplicities, from a vertex $u\in V_2$, of the Hadamard distance-biregular graph $Hb(n)$.}
\label{table2}
\end{table}

%


\end{document}